\documentclass{article}

\usepackage{libertinus}

\usepackage{amsthm,amsmath,amssymb,latexsym}
\usepackage{hyperref}

\newtheorem{theorem}{Theorem}

\newtheorem{lemma}[theorem]{Lemma}

\newtheorem{remark}[theorem]{Remark}

\newtheorem{example}[theorem]{Example}
\newtheorem{assumption}[theorem]{Assumption}

\def\mE{\mathbb{E}}
\def\mP{\mathbb{P}}

\begin{document}

\title{On stochastic stability of systems\\ driven by red noise\thanks{The authors gratefully acknowledge the support of 
the National Research, Development and Innovation Office (NKFIH) through grants K 143529 and KKP 137490.}}

\author{Bal\'azs Gerencs\'er \and Mikl\'os R\'asonyi\thanks{B.\ Gerencs\'er (email: gerencser.balazs@renyi.hu) and M.\ R\'asonyi (email: rasonyi.miklos@renyi.hu) are with HUN-REN Alfr\'ed R\'enyi Institute of Mathematics, Budapest, Hungary and E\"otv\"os Lor\'and{}
University, Budapest, Hungary }}

\date{\today}

\maketitle
{}

\begin{abstract}
We consider discrete-time non-linear stochastic dynamical systems where the driving noise is 
not i.i.d.\ but an autoregressive process (``red noise''). Under a standard dissipativity condition, 
we prove geometrically fast convergence to 
a stationary distribution in total variation. 
\end{abstract}

\section{Introduction}

We consider a pair of processes $(Y_{n},X_{n})\in\mathbb{R}^d\times\mathbb{R}^d$, $n\in\mathbb{N}$. The first 
is an autoregressive process
\begin{equation}\label{recursion0}
Y_{n+1}-Y_n=-hAY_{n}+\varepsilon_{n+1},
\end{equation}
where $\varepsilon_{n}$, $n\in\mathbb{N}$ is an i.i.d.\ sequence in $\mathbb{R}^{d}$, $h>0$ and $A$ is a matrix.
$Y_n$, $n\geq 1$ will represent the ``red noise'' driving our system. 
The second process is given by the recursion
\begin{equation}\label{recursion1}
X_{n+1}-X_{n}=h \mu(X_n)+\sigma(X_n)Y_{n+1},
\end{equation}
where $\mu:\mathbb{R}^d\to\mathbb{R}^d$, $\sigma:\mathbb{R}^d\to \mathbb{R}^{d\times d}$ are measurable functions. 
We assume throughout this paper that
$(Y_0,X_0)$ are independent of $(\varepsilon_n)_{n\geq 1}$.

\begin{example}{\rm Consider the stochastic differential equations
\begin{eqnarray}
d\hat{Y}_t &=& -A\hat{Y}_t\, dt+dL_t,\label{eho}\\ 
d\hat{X}_t &=& \mu(\hat{X}_t)\, dt+\sigma(\hat{X}_t)d\hat{Y}_t,\label{eho1}
\end{eqnarray}
where $L_t$ is a L\'evy-process. $\hat{Y}_t$ is a specimen of the often-studied Ornstein-Uhlenbeck processes,
see \cite{ou}. In this setting, $(Y_n,X_n)$ corresponds to the Euler-discretization with
step size $h$ for \eqref{eho} and \eqref{eho1}.

However, $(Y_n,X_n)$ is not necessarily of this form. We only assume that, observed along a given time grid,
the system obeys the dynamics \eqref{recursion1} with an autoregressive noise \eqref{recursion0}. In
particular, the law of $\varepsilon_n$ need not be infinitely divisible.
}
\end{example}

Stochastic dynamical systems like \eqref{recursion1} appear frequently in
the literature with a stationary driving noise $Y_{n}$. 
Such $X_n$
are called a \emph{stochastically recursive sequence}, thoroughly studied in \cite{borovkov} but only
under some contractivity assumption on $\mu$. 

In our setting, $Y_n$ is not necessarily stationary, but it is an autoregressive (hence Markovian) process,
called \emph{red noise}. This is a particularly often studied case, motivated by applications
from physics and economics, see \cite{red1,starmer}. More recently, interest in these models has
intensified \cite{red2,red3,red5,red6,red7}.

When $\mu$ is not contractive, only dissipative (see Assumption \ref{equation} below), the techniques of \cite{borovkov}
do not apply and literature becomes scarce.
\cite{hairer-cont} studied stochastic differential equations driven
by fractional Brownian motion, introducing deep techniques.
The methodology of \cite{hairer-cont} was translated to a discrete-time context by \cite{varvenne}.
The main focus of the latter paper is on stationary and Gaussian $Y_{n}$ whose
covariance function may decay slowly (that is, at a power rate, such as the
increments of fractional Brownian motion). Such $Y_n$ fail the Markov property and intricate
arguments are needed to establish convergence of $X_{n}$ to a limiting law.

Subsection 2.6 of \cite{varvenne} also treats a special case of our setting: when $\varepsilon_n$
are standard Gaussian, $A$ is constant times the identity, $Y_n$ is stationary,
$\mu$ and $\sigma$ satisfy continuity hypotheses, \cite{varvenne}
establishes convergence of $X_{n}$ to a limiting law at a rate $O(n^{-p})$, for all $p>0$.

The current paper offers essential improvements with respect to \cite{varvenne}: non-Gaussian $\varepsilon_n$ 
are allowed; our conditions on $\mu,\sigma$ are weaker; 
a more general autoregressive dynamics is considered; $Y_n$ need not be stationary, and, most importantly, we 
prove optimal, exponential convergence
rate of the system to its invariant distribution. We stress that the techniques of \cite{varvenne} apply 
in a much more general context than ours. Our contribution consists of proving sharper 
results in the specific case where $Y_n$ is an autoregressive process. 

Note that, in our
setting, $X_{n}$ fails the Markov property but the pair $(X_{n},Y_{n})$ \emph{is} Markovian.
The textbook approach (see e.g.\ \cite{mt}) is verifying $\psi$-irreducibility and aperiodicity of $(X_n,Y_n)$ and
then showing that the ``Foster-Lyapunov drift condition'' and ``minorization on a small set'' hold.
Although the system \eqref{recursion0}, \eqref{recursion1} satisfy a drift condition
(see Lemma \ref{foster-lyapunov} below), aperiodicity and minorization
seem laborious to verify. For this reason we construct the necessary couplings directly, using a slightly unusual approach.

Section \ref{resu} states our assumptions and main result, Section \ref{proo} provides the proofs.
Section \ref{conclu} is about future research directions.

\section{Convergence to a stationary state}\label{resu}

We are working on a probability space $(\Omega,\mathcal{F},\mathbb{P})$, expectation is denoted by $\mathbb{E}[\cdot]$. For each
event $A\in\mathcal{F}$, ${1}_{A}$ refers to the corresponding indicator function.
Euclidean norm on $\mathbb{R}^m$ is denoted $|\cdot|$, $m$ will always be clear from the context;
$\mathcal{B}(\mathbb{R}^m)$ refers to the Borel sets.
For probabilities $\mu_1,\mu_2$ on $\mathcal{B}(\mathbb{R}^m)$, their
total variation distance is defined as
$$
d_{TV}(\mu_1,\mu_2):=\inf_{X_1\sim \mu_1,X_2\sim \mu_2}\mathbb{P}(X_1\neq X_2)
$$
where the infimum is over pairs of random variables with the respective laws.

For $d\times d$ matrices $M$ we use the norm 
$
||M||:=\sup_{|x|\leq 1}|Mx|,
$
$d$-dimensional Lebesgue measure is denoted $\mathrm{Leb}_d(\cdot)$. $B_d(r)$ denotes the (closed) ball of
radius $r$ around $0\in\mathbb{R}^d$. The standard scalar product in $\mathbb{R}^d$ is denoted $\langle\cdot,\cdot\rangle$.
Our technical 
assumptions we need are listed below.

\begin{assumption}\label{initiale}
$
C_{0}:=\mathbb{E}[|X_0|+|Y_0|]<\infty.
$
\end{assumption}

\begin{assumption}\label{bb}
The random variable $\varepsilon_{0}$ has a law that is absolutely continuous w.r.t.\ $\mathrm{Leb}_d$. Its density function 
$f:\mathbb{R}^{d}\to\mathbb{R}_{+}$ is 
bounded away from $0$ on bounded sets, that is, $\inf_{x\in B_d(r)}|f(x)|\geq \lambda_r>0$ for
each $r>0$.	
$\mathbb{E}[|\varepsilon_{0}|]<\infty$.
\end{assumption}

\begin{assumption}\label{mju}
$\sigma$ is bounded. The matrix $\sigma(x)$ is invertible
for all $x\in\mathbb{R}^d$. The mapping $x\to\sigma^{-1}(x)$ is locally bounded.
\end{assumption}

\begin{assumption}\label{aat}
$A+A^T$ is positive definite.
\end{assumption}

\begin{assumption}\label{equation}   
There are $c_{1},c_{2},c_3>0$ such that
\begin{align}
  \langle \mu(x),x\rangle&\leq -c_{1} |x|^{2}+c_{2},\ x\in\mathbb{R}^{d},\label{kell}\\
  |\mu(x)|&\leq c_3(1+|x|),\ x\in\mathbb{R}^d.\label{valaki}
\end{align}
\end{assumption}

\begin{remark} {\rm Assumptions \ref{initiale}, \ref{bb} are rather mild. Unlike \cite{varvenne}, we do not need continuity of
either $\mu$ or $\sigma$, only Assumption \ref{mju}. The positivity of $A+A^T$ is equivalent to the ergodicity of \eqref{eho} in the continuous-time context, 
see e.g. \cite{risken}. \eqref{kell} is a standard dissipativity condition of Markov chain theory,
appearing also in e.g.\ \cite{deya_et_al,varvenne}, \eqref{valaki} is also a standard requirement
on linear growth.}
\end{remark}

\begin{theorem}\label{main}
Under Assumptions \ref{initiale}, \ref{bb}, \ref{mju}, \ref{aat} and \ref{equation}, there is a probability 
$\Pi_*$ on
$\mathcal{B}(\mathbb{R}^{2d})$ such that 
\begin{equation}\label{totale}
d_{TV}(\mathrm{Law}(X_{n},Y_{n}), \Pi_*)=O(\rho^n)	
\end{equation} for some $\rho<1$. $\Pi_{*}$ is the unique stationary distribution	
of the Markovian system \eqref{recursion0}, \eqref{recursion1} with $\int_{\mathbb{R}^{2d}}|w|\Pi_{*}(dw)<\infty$.
\end{theorem}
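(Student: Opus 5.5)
We will treat the pair $Z_n:=(X_n,Y_n)$ as a Markov chain on $\mathbb{R}^{2d}$ and prove the bound \eqref{totale} by an explicit coupling. The coupling rests on a Foster--Lyapunov drift inequality together with a multi-step minorization obtained directly from the densities.

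\textbf{Step 1: drift.} We will show that $V(x,y)=|x|+\kappa|y|$, for a suitable small $\kappa>0$, satisfies
\[
\mE[V(Z_{n+1})\mid Z_n]\le \gamma V(Z_n)+K
\]
with some $\gamma<1$, possibly after passing to the $m$-step kernel. For $Y$ we use Assumption \ref{aat}: choosing the norm $|y|_Q^2=\langle Qy,y\rangle$ with $Q$ solving a Lyapunov equation, or simply taking $h$ small so that $\|I-hA\|<1$, gives contraction of $y\mapsto(I-hA)y$. (If $h$ is not small, we work with a $Q$-norm in which $I-hA$ contracts; this is where the hypothesis will be needed.) For $X$, the bound \eqref{kell} gives $|x+h\mu(x)|^2\le (1-2hc_1+h^2c_3^2)|x|^2+C$. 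This contracts only if $h$ is small, so we expect the paper either to restrict $h$ or to iterate several steps. The noise term $\sigma(X_n)Y_{n+1}$ is bounded by $\|\sigma\|_\infty|Y_{n+1}|$, which the $\kappa|y|$ part absorbs. Assumptions \ref{initiale} and \ref{bb} guarantee that all these expectations are finite.

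\textbf{Step 2: minorization on a sublevel set.} Let $C=\{V\le R\}$. We will show that there exist $k$ and $\alpha>0$ such that for any $z,z'\in C$ the $k$-step laws from $z$ and $z'$ overlap with mass at least $\alpha$. We will build this coupling by hand, using two consecutive steps of noise ($k=2$).

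Given $Z_n=(x,y)$, the new state $Y_{n+1}=(I-hA)y+\varepsilon_{n+1}$ has a density that is bounded below on balls, by Assumption \ref{bb}. Since $X_{n+1}=x+h\mu(x)+\sigma(x)Y_{n+1}$ and $\sigma(x)$ is invertible, $X_{n+1}$ also has a density bounded below on a ball. This follows from local boundedness of $\sigma^{-1}$ and boundedness of $\sigma$, which control the Jacobian.

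However, $X_{n+1}$ and $Y_{n+1}$ are driven by the same $\varepsilon_{n+1}$, so one step gives only a degenerate law on $\mathbb{R}^{2d}$. A second step fixes this. The map
\[
(\varepsilon_{n+1},\varepsilon_{n+2})\mapsto (X_{n+2},Y_{n+2})
\]
is invertible: $\varepsilon_{n+1}$ determines $X_{n+1}$ and $Y_{n+1}$; then $Y_{n+2}=(I-hA)Y_{n+1}+\varepsilon_{n+2}$ and $X_{n+2}=X_{n+1}+h\mu(X_{n+1})+\sigma(X_{n+1})Y_{n+2}$. Conversely, $Y_{n+2}$ and $X_{n+2}$ determine $Y_{n+1}$ through
\[
\sigma(X_{n+1})^{-1}\big(X_{n+2}-X_{n+1}-h\mu(X_{n+1})\big)=Y_{n+2}.
\]
This inversion is delicate because $X_{n+1}$ enters nonlinearly and $\mu$ is only measurable, so there is no change-of-variables formula.

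Instead we will couple directly. First, using the densities, we couple $\varepsilon_{n+1}$ and $\varepsilon'_{n+1}$ so that $Y_{n+1}=Y'_{n+1}$ with probability at least $\alpha_1$, uniformly over $C$, by the overlap of densities bounded below on a common ball. Then, conditionally on that event, we choose $\varepsilon_{n+2}$ and $\varepsilon'_{n+2}$ so that $X_{n+2}=X'_{n+2}$ and $Y_{n+2}=Y'_{n+2}$. The first equality forces
\[
\sigma(X_{n+1})Y_{n+2}-\sigma(X'_{n+1})Y'_{n+2}=\text{given},
\]
and the second forces $\varepsilon_{n+2}=\varepsilon'_{n+2}$ once $Y_{n+1}=Y'_{n+1}$. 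This over-constrains the system. To avoid it, we instead match $X$ first at step $n+1$ and $Y$ at step $n+2$. Equal $X_{n+1}$ requires $\sigma(x)Y_{n+1}$ and $\sigma(x')Y'_{n+1}$ to differ by a known bounded vector; this is a shift of $\varepsilon'$ by a bounded amount composed with the linear map $\sigma(x')^{-1}\sigma(x)$, and it is achievable with probability bounded below because $f$ is bounded below on balls and $\sigma^{\pm1}$ are bounded on $C$. At step $n+2$, with $X_{n+1}=X'_{n+1}$, we couple $\varepsilon_{n+2}$ and $\varepsilon'_{n+2}$ so that $Y_{n+2}=Y'_{n+2}$, which is possible by the same density overlap. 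Then $X_{n+2}=X'_{n+2}$ automatically. So $k=2$ and $\alpha=\alpha_1\alpha_2>0$.

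\textbf{Step 3: conclusion.} Combining the drift inequality with the two-step coupling on $C$ gives geometric ergodicity by the standard argument (e.g.\ Harris's theorem in the Hairer--Mattingly form, or a direct coupling-time estimate). This yields a unique invariant $\Pi_*$ with $\int V\,d\Pi_*<\infty$ and the bound \eqref{totale}, since $\mE V(Z_0)<\infty$ by Assumption \ref{initiale}. Aperiodicity is handled implicitly because the coupling succeeds at a fixed lag. Uniqueness within the class of laws with finite first moment also follows from the contraction.

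We expect the main obstacle to be Step 1 when $h$ is not small, i.e.\ obtaining a genuine contraction from \eqref{kell} and \eqref{valaki} for arbitrary $h$. If the one-step estimate fails, we would first try using $|x|^2$ and a weighted $|y|_Q^2$ to extract a strict decrease. The measurability-only regularity in Step 2 is handled by the explicit coupling above.
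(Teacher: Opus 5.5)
Your proposal works for small $h$, after the corrections below. Its first two steps are the paper's. Step 1 uses the Lyapunov function $|x|+B|y|$ together with $\|I-hA\|<1$ (Lemmas~\ref{varv} and \ref{foster-lyapunov}). Step 2 is the two-stage coupling of Subsection~\ref{loco}: first force $X_{n+1}=\bar X_{n+1}$ through the affine map $F$ with linear part $\sigma(\bar x_0)^{-1}\sigma(x_0)$, then force $Y_{n+2}=\bar Y_{n+2}$ through the shift $H$. That is exactly the order you settle on after discarding the over-constrained one.

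The routes diverge at Step 3. The paper does not cite Harris's theorem. It builds an explicit measurable coupled copy $(\bar X,\bar Y)$ with $\mathrm{Law}(\bar X_m,\bar Y_m)=\mathrm{Law}(X_{m'},Y_{m'})$. It attempts the coupling at successive joint returns $\tau_k$ to $D$, spaced by at least $3$ to handle the even/odd alternation of $S_0,S_1$. It controls $\mE[\delta_4^{\tau_k}]\le L^k$ via the supermartingale of Lemma~\ref{lemmataud}, and concludes that $\mathrm{Law}(X_m,Y_m)$ is Cauchy in total variation at a geometric rate.

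You instead apply Harris's theorem to $P^2$, using the drift $P^2V\le\gamma^2V+(1+\gamma)K$ and the pairwise bound $d_{TV}(P^2(z,\cdot),P^2(z',\cdot))\le 1-\alpha$ on every sublevel set, then pass from $P^2$ to $P$. The pairwise bound is all the Hairer--Mattingly proof uses. Your construction even gives a $z$-independent minorant, with no change of variables needed. Indeed, $P^2(z,\cdot)$ is the image, under $(\xi,w)\mapsto(\xi+h\mu(\xi)+\sigma(\xi)w,\,w)$, of the law of $(X_{n+1},Y_{n+2})$. That law has a density bounded below on a fixed product of balls, uniformly in $z\in C$. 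Your route is shorter, avoids the measurable-selection and return-time bookkeeping, and gives convergence in $V$-weighted total variation. The paper's argument is self-contained and fully explicit.

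Three corrections are needed.

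\textbf{The weight on $|y|$ must be large, not small.} Conditionally on $Z_n$, $|X_{n+1}|$ contributes $\|\sigma\|_\infty\|I-hA\|\,|Y_n|$. Absorbing this in one step requires $(\|\sigma\|_\infty+\kappa)\|I-hA\|\le\gamma\kappa$, hence $\kappa$ large; the paper takes $B\ge 2C_1\tilde\delta_2/(1-\tilde\delta_2)$.

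\textbf{Large $h$ cannot be rescued by a $Q$-norm.} The eigenvalues $1-h\lambda$ of $I-hA$ leave the unit disc for large $h$. For example, $A=I$, $h=3$ gives $Y_{n+1}=-2Y_n+\varepsilon_{n+1}$ with $|Y_n|\to\infty$. So the statement is false there and implicitly requires $h\le h_1$, which the paper fixes from Lemma~\ref{foster-lyapunov} on. The ``main obstacle'' you anticipate is a hidden hypothesis, not something to overcome.

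\textbf{The second coupling stage needs a boundedness restriction.} Restrict the first-stage success event to $\{|\varepsilon_{n+1}|\le 1\}$, as the paper does. Then $Y_{n+1}-Y'_{n+1}$ stays uniformly bounded, and the overlap bound at step $n+2$ is uniform over $C$.
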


\section{Proofs}\label{proo}

This section is divided into four subsections: the first one verifies a Foster-Lyapunov
type drift condition for the system $(X_n,Y_n)$; the second realizes a
coupling that is used when two copies of the system return into a suitable compact
set simultaneously; the third one constructs a copy of the process that will be ``close''
to a shifted version of the same process. 
Finally the proof of Theorem \ref{main} is completed.

\subsection{Lyapunov functions}

\begin{lemma}\label{varv} Under Assumptions \ref{mju} and \ref{equation}, there is $h_0>0$ such that,
for $h\leq h_0$,
there exist $0<\delta_1<1$ and $C_1=C_1(h)>0$ such that
\begin{equation}\label{dollars}
|x+h\mu(x)+\sigma(x)w|\leq \delta_1 |x|+C_1(1+|w|),\ x,w\in\mathbb{R}^d.
\end{equation}
\end{lemma}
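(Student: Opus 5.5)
The noise term is split off first. By the triangle inequality and boundedness of $\sigma$ (Assumption \ref{mju}), with $\bar\sigma:=\sup_x\|\sigma(x)\|<\infty$,
$$
|x+h\mu(x)+\sigma(x)w|\leq |x+h\mu(x)|+\bar\sigma|w|.
$$
So the whole task is to show $|x+h\mu(x)|\leq \delta_1|x|+C$ for some $\delta_1<1$ and constant $C$. One can then take $C_1:=\max(C,\bar\sigma)$.

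For the deterministic part, expand the square and apply \eqref{kell} and \eqref{valaki}:
$$
|x+h\mu(x)|^2=|x|^2+2h\langle\mu(x),x\rangle+h^2|\mu(x)|^2\leq |x|^2-2hc_1|x|^2+2hc_2+h^2c_3^2(1+|x|)^2.
$$
Using $(1+|x|)^2\leq 2+2|x|^2$, this gives
$$
|x+h\mu(x)|^2\leq (1-2hc_1+2h^2c_3^2)|x|^2+2hc_2+2h^2c_3^2.
$$
Now choose $h_0>0$ small enough that $2h^2c_3^2\leq hc_1$ for $h\leq h_0$, i.e.\ $h_0\leq c_1/(2c_3^2)$. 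Also require $h_0c_1<1$. Then the coefficient satisfies
$$
1-2hc_1+2h^2c_3^2\leq 1-hc_1=:\delta_1^2\in(0,1).
$$

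Finally, use $\sqrt{a+b}\leq\sqrt a+\sqrt b$. This yields $|x+h\mu(x)|\leq\delta_1|x|+\sqrt{2hc_2+2h^2c_3^2}$, so $\delta_1=\sqrt{1-hc_1}<1$, which depends on $h$; this is allowed by the statement. Combining with the first step proves \eqref{dollars}.

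There is no real obstacle here. The only point needing care is choosing $h_0$ so that the $h^2$ term from the linear growth bound is dominated by the dissipative $-2hc_1$ term. This is exactly why the statement is restricted to small step sizes.
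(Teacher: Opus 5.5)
Your proof is correct. The paper gives no argument of its own; it cites Theorem 2.2 of \cite{varvenne}, which rests on this same standard computation: expand $|x+h\mu(x)|^2$, apply \eqref{kell} and \eqref{valaki}, and absorb the $O(h^2)$ linear-growth term into the dissipative $-2hc_1|x|^2$ term for small $h$. Your self-contained version also confirms the paper's remark that continuity of $\mu,\sigma$ is never used --- you rely only on boundedness of $\sigma$ and Assumption \ref{equation}.
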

\begin{proof}
See Theorem 2.2 of \cite{varvenne,varvenne1}. 
Note that in \cite{varvenne} continuity is assumed for $\sigma,\mu$, but this
property is not used in the proof of \eqref{dollars}.
\end{proof}

\begin{lemma}\label{foster-lyapunov} Let $\mathcal{G}_{n}$, $n\in\mathbb{N}$ be an arbitrary
filtration such that, for all $n\in\mathbb{N}$, 
$\sigma(X_{j},Y_{j},0\leq j\leq n)\subseteq\mathcal{G}_{n}$ and $\varepsilon_{n+1}$ is independent of $\mathcal{G}_{n}$.

    Under Assumptions \ref{mju} and \ref{equation} there exist $h_1>0$, $B\geq 1$ so that for $h\leq h_1$ 
    \[
    V(x,y) = |x|+B|y|
    \]
    serves as a proper Lyapunov function. That is, there exist $C_2>0, \delta_2\in(0,1)$ so that
    \begin{equation}\label{nemo}
    \mE[V(X_{n+1},Y_{n+1})\mid\mathcal{G}_{n}]\leq \delta_2 V(X_{n},Y_{n})+C_2.
    \end{equation}
        Furthermore, there are $C_{3}>0$ and $\delta_2<\delta_3<1$ such that on the event
$\{|X_n|+B|Y_n|>C_3\}$
\begin{equation}
    \label{eq:lyap_contraction}
    \mE_n[V(X_{n+1},Y_{n+1})\mid\mathcal{G}_{n}]\leq \delta_3 [V(X_{n},Y_{n})],
\end{equation}
that is, we have a strict contraction in expectation.
\end{lemma}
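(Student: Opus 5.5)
The plan is to compute the conditional expectation of each summand of $V$ separately, using Lemma \ref{varv} for the $X$-part and the contractivity of $I-hA$ for the $Y$-part, and then to absorb the cross term by taking $B$ large.

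\emph{Step 1: the $Y$-part.} We have $Y_{n+1}=(I-hA)Y_n+\varepsilon_{n+1}$. By Assumption \ref{aat} there is $a>0$ with $\langle Ay,y\rangle\geq a|y|^2$. Hence
\[
|(I-hA)y|^2=|y|^2-2h\langle Ay,y\rangle+h^2|Ay|^2\leq (1-2ha+h^2||A||^2)|y|^2 .
\]
So for $h$ small enough, $||I-hA||\leq \kappa:=\sqrt{1-ha}<1$. Since $\varepsilon_{n+1}$ is independent of $\mathcal{G}_n$ and $Y_n$ is $\mathcal{G}_n$-measurable,
\[
\mE[|Y_{n+1}|\mid\mathcal{G}_n]\leq \kappa|Y_n|+m,\qquad m:=\mE|\varepsilon_0|<\infty .
\]
Here $m<\infty$ by Assumption \ref{bb}, which should be added to the hypotheses, or is implicit.

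\emph{Step 2: the $X$-part.} By \eqref{recursion1}, $X_{n+1}=X_n+h\mu(X_n)+\sigma(X_n)Y_{n+1}$. Lemma \ref{varv} with $w=Y_{n+1}$ gives, for $h\leq h_0$,
\[
|X_{n+1}|\leq \delta_1|X_n|+C_1(1+|Y_{n+1}|).
\]
Taking conditional expectations and using Step 1,
\[
\mE[|X_{n+1}|\mid\mathcal{G}_n]\leq \delta_1|X_n|+C_1+C_1\kappa|Y_n|+C_1m .
\]

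\emph{Step 3: combining and choosing $B$.} Adding the two bounds,
\[
\mE[V(X_{n+1},Y_{n+1})\mid\mathcal{G}_n]\leq \delta_1|X_n|+(C_1\kappa+B\kappa)|Y_n|+C_1(1+m)+Bm .
\]
We want $(C_1+B)\kappa\leq\delta' B$ for some $\delta'<1$. This holds with $\delta'=(1+\kappa)/2$ once $B\geq 2C_1\kappa/(1-\kappa)$; also take $B\geq1$.

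Then \eqref{nemo} holds with $\delta_2=\max(\delta_1,\delta')<1$ and $C_2=C_1(1+m)+Bm$, for $h\leq h_1:=\min(h_0,h_A)$. Here $h_A$ is the threshold from Step 1.

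The main subtlety is that $C_1=C_1(h)$ may blow up as $h\to0$ while $1-\kappa\sim ha/2$. This is harmless, because $h$ is fixed once $h\leq h_1$ and $B$ is chosen afterwards, depending on $h$.

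\emph{Step 4: strict contraction.} Pick any $\delta_3\in(\delta_2,1)$. On $\{V(X_n,Y_n)>C_3\}$ with $C_3:=C_2/(\delta_3-\delta_2)$, we get $C_2<(\delta_3-\delta_2)V$. Hence $\delta_2V+C_2\leq \delta_3 V$, which is \eqref{eq:lyap_contraction}.
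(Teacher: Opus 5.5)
Your proof is correct and follows the paper's argument essentially line by line: Lemma \ref{varv} with $w=Y_{n+1}$, the bound $\|I-hA\|<1$ from positive definiteness of $A+A^T$, the choice $B\geq 2C_1\kappa/(1-\kappa)$, $\delta_2=\max(\delta_1,(1+\kappa)/2)$, and $C_3=C_2/(\delta_3-\delta_2)$. Your remarks are also accurate: the proof needs Assumptions \ref{aat} and \ref{bb} as well, and $B$ must be chosen after $h$ is fixed; the paper does the same implicitly through $C_1(h)$, since no contraction constant for $I-hA$ can be uniform as $h\to 0$.
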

\begin{proof}
  Apply Lemma \ref{varv} with $(x,w)=(X_n,Y_{n+1})$, add $B|Y_{n+1}|$ to two both sides ($B\geq 1$ to be specified later):
\begin{equation}\label{blofeld1}
|X_{n+1}|+B|Y_{n+1}|\leq \delta_1 |X_{n}|+(B+C_1)|Y_{n+1}|+C_1.
\end{equation}
Using the definition of the process \eqref{recursion0} we can write
\begin{equation}\label{blofeld2}
\mE[|Y_{n+1}| \mid \mathcal{G}_{n}] \leq \|I-hA\|\cdot |Y_n| + C_{\varepsilon},
\end{equation}
with $C_{\varepsilon}:=\mE[|\varepsilon_0|]$ since $\varepsilon_{n+1}$ is independent of $\mathcal{G}_{n}$ and $Y_{n}$
is $\mathcal{G}_{n}$-measurable.
Taking conditional expectation of \eqref{blofeld1} and using \eqref{blofeld2} we obtain
\begin{equation}
  \label{eq:lyap_contract_proof}
  \begin{aligned}
    \mE[&|X_{n+1}|+B|Y_{n+1}|\mid\mathcal{G}_{n}]\\
    &\leq \delta_1 |X_{n}|+(B+C_1)\|I-hA\|\cdot |Y_n|+C_2,
  \end{aligned}    
\end{equation}
with $C_2 = C_{\varepsilon}(B+C_1)+C_1$, noting that $X_{n}$ is $\mathcal{G}_{n}$-measurable.

We need to control the norm $\|I-hA\|$.
By Assumption \ref{aat} there exists $\gamma>0$ so that $A^\top+A-\gamma I$ is positive definite. For any unit vector $y$,
\begin{align*}
|(I-hA)y|^2&= y^\top(I-hA)^\top(I-hA)y\\
&=|y|^2-hy^\top(A^\top+A)y+h^2y^\top A^\top A y\\
&\leq 1 - h\gamma +h^2\|A\|^2.  
\end{align*}
Therefore for a certain $h_1>0$ this is bounded above by an appropriate $\tilde\delta_2<1$,
for all $0<h\leq h_1$.
Let us now compare the coefficients of the $Y$ terms in \eqref{eq:lyap_contract_proof}.
Choosing $$B\geq \max\left\{1,\frac{2C_{1}\tilde{\delta}_{2}}{1-\tilde{\delta}_{2}}\right\},$$
we see that 
\[
\frac{(B+C_1)\|I-hA\|}{B}\leq \frac{1+\|I-hA\|}{2}\leq\frac{1+\tilde\delta_2}{2}<1,
\]
thus we can further bound the right-hand side of \eqref{eq:lyap_contract_proof} as
\begin{equation*}
    \mE[|X_{n+1}|+B|Y_{n+1}|\mid\mathcal{G}_{n}]\leq \delta_1 |X_{n}|+\frac{1+\tilde\delta_2}{2}B |Y_n|+C_2.
\end{equation*}
This confirms the claim with $\delta_2 = \max(\delta,\frac{1+\tilde\delta_2}{2})$.

Concerning the additional claim, for any $\delta_2<\delta_3<1$ we can satisfy 
$$\delta_2 V(X_{n},Y_{n})+C_2\leq \delta_3 
V(X_{n},Y_{n})$$ 
as soon as $V(X_{n},Y_{n})\geq C_2/(\delta_3-\delta_2)=:C_{3}$.
\end{proof}

Define the filtration $\mathcal{F}_n:=\sigma(X_0,Y_0,\varepsilon_j,1\leq j\leq n)$,
$n\in\mathbb{N}$. Clearly, the above lemma applies with $\mathcal{G}_{n}=\mathcal{F}_{n}$. 
We will, however, use it for a larger filtration later on.

We immediately get a bound globally in time.
\begin{lemma}\label{moments} The following moment estimate holds:
$$
C_5:=\sup_{n\in\mathbb{N}}\mathbb{E}[V(X_n,Y_n)]<\infty.
$$
\end{lemma}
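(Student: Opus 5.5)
We plan to take unconditional expectations in the drift inequality \eqref{nemo} of Lemma \ref{foster-lyapunov}, applied with $\mathcal{G}_n=\mathcal{F}_n$, and then iterate the resulting linear recursion. Throughout we take $h\leq \min(h_0,h_1)$, so that Lemma \ref{foster-lyapunov} is available. Write $v_n:=\mathbb{E}[V(X_n,Y_n)]$.

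First we check that each $v_n$ is finite, so that the tower property can be applied without concern. This follows by induction on $n$. The base case is $v_0\leq B\,C_0<\infty$, which uses Assumption \ref{initiale} and $B\geq 1$. For the inductive step, \eqref{nemo} holds almost surely, and its right-hand side is integrable whenever $v_n<\infty$. Since $V\geq 0$, we may take expectations directly: conditional expectations of nonnegative variables are always defined, so no integrability issue arises.

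Taking expectations in \eqref{nemo} yields
\[
v_{n+1}\leq \delta_2 v_n + C_2,\qquad n\in\mathbb{N}.
\]
Iterating this inequality gives
\[
v_n\leq \delta_2^n v_0 + C_2\sum_{k=0}^{n-1}\delta_2^k\leq B C_0+\frac{C_2}{1-\delta_2}.
\]
The bound on the right does not depend on $n$. Taking the supremum over $n$ therefore gives $C_5\leq BC_0+C_2/(1-\delta_2)<\infty$.

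None of these steps presents a real obstacle. The only points that need care are the ones already handled above: taking expectations of the nonnegative quantities is legitimate, and $v_0$ is finite by Assumption \ref{initiale}. The contraction factor $\delta_2<1$ from Lemma \ref{foster-lyapunov} is what makes the geometric series summable.
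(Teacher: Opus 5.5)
Your proposal is correct and matches the paper's proof: take expectations in \eqref{nemo}, iterate the linear recursion, and use $B\geq 1$ with Assumption \ref{initiale} to get $\mE[V(X_0,Y_0)]\leq BC_0$, hence $C_5\leq BC_0+C_2/(1-\delta_2)$. Your separate finiteness-by-induction step is harmless but not needed, since the tower property holds for nonnegative variables regardless.
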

\begin{proof}
Iterating \eqref{nemo} in Lemma \ref{foster-lyapunov} we arrive at
  \vspace{-8pt}
\[
\mathbb{E}[V(X_n,Y_n)]\leq \delta_2^n\mathbb{E}[V(X_0,Y_0)]+\sum_{j=0}^{n-1}\delta_2^j C_2
\vspace{-8pt}
\]
  so by Assumption \ref{initiale}
  \[
C_5\leq \mathbb{E}[V(X_0,Y_0)]+\frac{C_2}{1-\delta_2}\leq BC_{0}
+\frac{C_2}{1-\delta_2}<\infty.\]
\end{proof}

\subsection{Local coupling construction}\label{loco}

Assumptions \ref{initiale}, \ref{bb}, \ref{mju}, \ref{aat} and \ref{equation} will 
be in force till the end of the paper and $h\leq h_{1}$ is fixed.

We explain the underlying idea of our proofs, in a heuristic way.
We will consider two copies $(X_n,Y_n)$ and $(\bar{X}_n,\bar{Y}_n)$ of the same process with
different initializations. We will monitor their return into a compact set. For simplicity, let us
assume that this occurs at time $n$ for both copies.

Our purpose is to construct random variables $(\bar{\varepsilon}_{n+1},\bar{\varepsilon}_{n+2})$
with the same law as $(\varepsilon_{n+1},\varepsilon_{n+2})$ in such a way that updating
the two processes with the respective noises, they should coincide with positive probability.
In what follows, $(x_0,\bar{x}_0,y_0,\bar{y}_0)$ will correspond to the (frozen) initial
values $(X_n,\bar{X}_n,Y_n,\bar{Y}_n)$, $(e_1,e_2)$ are the frozen values of
$(\varepsilon_{n+1},\varepsilon_{n+2})$. Using the dynamics of the system, we will write
and solve equations for finding the appropriate values for $(\bar{\varepsilon}_{n+1},\bar{\varepsilon}_{n+2})$,
denoted $(\bar{e}_1,\bar{e}_2)$, as a function of the frozen parameters. This simple plan
will be executed in two steps.

First define, for  $x_{0},y_{0},\bar{x}_{0},\bar{y}_{0},e_1\in\mathbb{R}^d$,
\begin{align*}
  F(x_{0},\bar{x}_{0},&y_{0},\bar{y}_{0},e_1) :=\\
&\sigma(\bar{x}_{0})^{-1}[x_{0}+h\mu(x_{0})+\sigma(x_{0})[y_{0}-hAy_{0}]
\\
+\, &\sigma(x_{0})e_1- \bar{x}_{0}-h\mu(\bar{x}_{0})-
\sigma(\bar{x}_{0})
[\bar{y}_{0}-hA\bar{y}_{0}]],
\end{align*}
When all other variables are fixed, this function is invertible in $e_{1}$, we denote its inverse by 
$F^{-1}(x_{0},\bar{x}_{0},y_{0},\bar{y}_{0},\cdot)$.
In the heuristic picture explained above, 
choosing $\bar{e}_1$ to be $F(x_{0},\bar{x}_{0},y_{0},\bar{y}_{0},e_1)$ ensures that $X_{n+1}=\bar{X}_{n+1}$.

Assume $K_{0}>0$ to be fixed for the moment, which will be chosen later.
By boundedness of $\sigma$ and local boundedness of $\mu,\sigma^{-1}$, there is a constant $c_\flat(K_{0})$ so that for all $x_{0},\bar{x}_{0},y_{0},\bar{y}_{0}\in B_d(K_{0})$ and $e_1\in B_d(1)$
\begin{equation}\label{dor}
|F(x_{0},\bar{x}_{0},y_{0},\bar{y}_{0},e_1)|\leq c_\flat(K_{0}).
\end{equation}
The derivative of $e_1\!\to\! F(x_{0},\bar{x}_{0},y_{0},\bar{y}_{0},e_1)$ is 
$\sigma^{-1}(\bar{x}_{0})\sigma(x_{0})$.
By boundedness of $\sigma$ and local boundedness of $\sigma^{-1}$, there is a constant $c_\sharp(K_{0})$ so that for all $x_{0},\bar{x}_{0}\in B_d(K_{0})$
\begin{equation}\label{dor1}
|\mathrm{det}(\sigma^{-1}({x}_{0})\sigma(\bar{x}_{0}))|\leq c_\sharp(K_{0}).
\end{equation}

Let $\kappa_{0}(A):=\eta_{0} \mathrm{Leb}(A\cap B_{d}(1))$, $A\in\mathcal{B}(\mathbb{R}^{d})$ 
with some $\eta_{0}>0$ to be chosen
later. Let $\bar{\kappa}_{0}(x_{0},y_{0},\bar{x}_{0},\bar{y}_{0},\cdot)$ be the image of the measure $\kappa_{0}$
by the mapping $F(x_{0},y_{0},\bar{x}_{0},\bar{y}_{0},\cdot)$.{}
By \eqref{dor}, $\bar{\kappa}$ is clearly concentrated on
$B_{d}(c_\flat(K_{0}))$, and by the density transformation formula,
it has a density 
\begin{align*}
  k_{0}(x_{0},&\bar{x}_{0},y_{0},\bar{y}_{0},\bar{e}_1)=\\
  &\frac{\eta_{0} 1_{F(B_d(1))}(\bar{e}_1) }
{\left|\mathrm{det}\left(\nabla F(x_{0},\bar{x}_{0},y_{0},\bar{y}_{0},F^{-1}(x_{0},\bar{x}_{0},y_{0},\bar{y}_{0},\bar{e}_1)
\right)\right|}
\end{align*}
which is bounded from above by $\eta_{0} c_{\sharp}(K_{0})$, by \eqref{dor1} and by the boundedness of $F^{-1}$.
Now choose $\eta_{0}$ so small that
\begin{equation}\label{vo}
\eta_{0} c_{\sharp}(K_{0})\leq \lambda_{c_\flat(K_{0})}/2\leq\inf_{\bar{e}_1\in B_d(c_{\flat}(K_{0}))}f(\bar{e}_1)/2.
\end{equation}

It follows from \eqref{vo} that setting
$$
\upsilon_0(x_{0},\bar{x}_{0},y_{0},\bar{y}_{0}, A) \! := \! \frac{\int_A [f(\bar{e}_1) \!-\! k_{0}(x_{0},\bar{x}_{0},y_{0},\bar{y}_{0},\bar{e}_1)]
\, d\bar{e}_1}{\int_{\mathbb{R}^{d}} [f(\bar{e}_1) \!-\! k_{0}(x_{0},\bar{x}_{0},y_{0},\bar{y}_{0},\bar{e}_1)]\, d\bar{e}_1}
$$
for $A\in\mathcal{B}(\mathbb{R}^{d})$ defines a probabilistic kernel.
Let $(U_{0},\hat{U}_{0})$ be uniform on $[0,1]^{2}$.

\begin{lemma}\label{consu} There is a measurable
$g_0:B_d(K_{0})^4\times [0,1]\to \mathbb{R}^{d}$ such that for all $x_{0},\bar{x}_{0},y_{0},\bar{y}_{0}\in B_{d}(K_{0})$,
$$
\mathrm{Law}(g_0(x_{0},\bar{x}_{0},y_{0},\bar{y}_{0},U_{0}))=\upsilon_0(x_{0},\bar{x}_{0},y_{0},\bar{y}_{0}, \cdot).{}
$$    
\end{lemma}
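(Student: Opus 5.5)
This is the standard measurable-parametrized version of the fact that every probability on $\mathbb{R}^d$ is the law of a measurable function of a uniform variable. The plan is to realize $\upsilon_0$ as the conditional law of a random vector on a standard Borel space, and then apply a quantile transform coordinate by coordinate, keeping the parameters measurable throughout.

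First, write $z:=(x_{0},\bar{x}_{0},y_{0},\bar{y}_{0})\in B_d(K_0)^4$. We check that $z\mapsto \upsilon_0(z,A)$ is measurable for each Borel $A$, so that $\upsilon_0$ is a genuine probability kernel.
\begin{itemize}
\item The density $k_0(z,\bar e_1)$ is jointly measurable in $(z,\bar e_1)$. It is built from $\sigma^{-1}$, $\mu$, $F^{-1}$ and the indicator of $F(z,B_d(1))$, all measurable compositions. Since $F$ is affine and invertible in $e_1$, we have $F^{-1}(z,\bar e_1)=\sigma(x_0)^{-1}\sigma(\bar x_0)(\bar e_1-b(z))$ for a measurable $b$. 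Likewise $1_{F(z,B_d(1))}(\bar e_1)=1_{B_d(1)}(F^{-1}(z,\bar e_1))$.
\item The integrand $f-k_0\ge f/2\ge 0$ by \eqref{vo}. Its total mass equals $1-\eta_0\mathrm{Leb}(B_d(1))>0$, provided $\eta_0$ is additionally taken small enough.
\item By Fubini/Tonelli, $z\mapsto\int_A (f-k_0)\,d\bar e_1$ is then measurable.
\end{itemize}

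Second, construct $g_0$ explicitly by iterated conditional quantile functions. For $d=1$, set
$$G(z,u):=\inf\{t\in\mathbb{R}:\upsilon_0(z,(-\infty,t])\ge u\}.$$
This is jointly measurable, since $\{G\le t\}=\{u\le \upsilon_0(z,(-\infty,t])\}$. By the usual quantile argument, $G(z,U_0)$ has law $\upsilon_0(z,\cdot)$. For general $d$, proceed coordinatewise. Take the first-coordinate marginal quantile, then the conditional quantile of coordinate $2$ given coordinate $1$, and so on. The conditional distribution functions can be written via the jointly measurable density $(f-k_0)/\int(f-k_0)$ using Tonelli, so no abstract disintegration is needed.

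Third, only one uniform $U_0$ is allowed, but $d$ independent uniforms are needed. Obtain them as $d$ measurable functions of $U_0$ by splitting its binary digits into $d$ interleaved subsequences. This yields i.i.d. uniforms $\phi_1(U_0),\dots,\phi_d(U_0)$. Alternatively, one can invoke Borel isomorphism of $\mathbb{R}^d$ with $[0,1]$ and use the $d=1$ quantile on the image measure.

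The main obstacle is bookkeeping rather than conceptual: ensuring joint measurability in $(z,u)$ of the conditional quantiles. Where conditional densities vanish or marginals are degenerate, I would define the quantile by an arbitrary fixed value, for example $0$, on a measurable null set. A more compact route is to cite the standard result that any probability kernel from a measurable space into a Polish space is representable as $g(z,U)$ with $g$ jointly measurable, which is Kallenberg's lemma on randomization. I would present this citation with the quantile construction as justification.
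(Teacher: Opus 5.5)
Your proposal is correct and takes the same route as the paper, which simply invokes the folklore randomization lemma for probability kernels (your Kallenberg citation is the same fact). Your check that $z\mapsto\upsilon_0(z,A)$ is measurable and your quantile sketch fill in details the paper leaves implicit. One minor point: no extra smallness of $\eta_0$ is needed, since $f-k_0\ge f/2$ already forces the normalizing constant $1-\eta_0\mathrm{Leb}_d(B_d(1))$ to be at least $1/2$.
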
 
\begin{proof} This is folklore, see Lemma 3.3 of \cite{saga-ld}.
\end{proof}

Now we describe, in terms of the heuristic picture, what happens at time $n$:
if $(x_0,y_0)=(\bar{x}_0,\bar{y}_0)$
then take $\bar{\varepsilon}_{n+1}:=\varepsilon_{n+1}$.
If $(x_0,y_0)\neq (\bar{x}_0,\bar{y}_0)$, $x_0,\bar{x}_0,y_0,\bar{y}_0\in B_d(K_{0})$, on the event
$J_{0}:=\{|\varepsilon_{n+1}|\leq 1\}\cap \{\hat{U}_{0}\leq \eta_{0}\}$ 
we define 
$$\bar{\varepsilon}_{n+1}:=F(x_{0},\bar{x}_{0},y_{0},\bar{y}_{0},\varepsilon_{n+1}).
$$ On the complement of $J_{0}$,
set $\bar{\varepsilon}_{n+1}:=g_0(x_{0},\bar{x}_{0},y_{0},\bar{y}_{0},U_{0})$. 
If $(x_0,y_0)\neq (\bar{x}_0,\bar{y}_0)$ but some of $x_0,\bar{x}_0,y_0,\bar{y}_0$ is outside $B_d(K_{0})$ then we just take 
$\bar{\varepsilon}_{n+1}:=\varepsilon_{n+1}$.
This definition
ensures that $\bar{\varepsilon}_{n+1}$ has the same law as $\varepsilon_{n+1}$.

Indeed, apply Lemma \ref{lemmacska} below
with the choice $$W:=(\varepsilon_{n+1},\hat{U}_{0}),\ Q=g_{0}(x_{0},y_{0},\bar{x}_{0},\bar{y}_{0},U_{0}),$$
and 
$A:=\{(e_{1},\hat{u}):\hat{u}\leq \eta_{0}, |e_{1}|\leq 1\}$, 
$$
G(e_{1},\hat{u}):=F(x_{0},y_{0},\bar{x}_{0},\bar{y}_{0},e_{1})1_{A}+e_{1}1_{(\mathbb{R}^{d}\times [0,1])\setminus A},$$
it is independent of the past and $X_{n+1} \!=\! \bar{X}_{n+1}$ holds on $J_{0}$.

Now, based on the above discussion, let us define {rigorously} the function $S_{0}$ that will be used for coupling.
If $(x_{0},y_{0})=(\bar{x}_{0},\bar{y}_{0})$ then we set $$
S_{0}(e_1,u,\hat{u},x_{0},y_{0},\bar{x}_{0},\bar{y}_{0}):=e_1.
$$ 

If $(x_{0},y_{0})\neq (\bar{x}_{0},\bar{y}_{0})$, $x_{0},y_{0},\bar{x}_{0},\bar{y}_{0}\in B_d(K_{0})$,
$e_1\in B_d(1)$ and
$\hat{u}\leq \eta_{0}$ then
$$S_0(e_1,u,\hat{u},x_{0},y_{0},\bar{x}_{0},\bar{y}_{0}):=F(x_{0},y_{0},\bar{x}_{0},\bar{y}_{0},e_1).
$$

If either $\hat{u}>\eta_{0}$ or $|e_{1}|>1$ then
$$
S_0(e_1,u,\hat{u},x_{0},y_{0},\bar{x}_{0},\bar{y}_{0}):=g(x_{0},\bar{x}_{0},y_{0},\bar{y}_{0},u).
$$

If $(x_{0},y_{0})\neq (\bar{x}_{0},\bar{y}_{0})$ and some of $x_{0},y_{0},\bar{x}_{0},\bar{y}_{0}$ fails to
lie in $B_d(K_{0})$
then we set again
$$S_0(e_1,u,\hat{u},x_{0},y_{0},\bar{x}_{0},\bar{y}_{0}):=e_{1}.$$

We now go on to time $n+1$, to the second step of the coupling procedure where $\bar{\varepsilon}_{n+2}$ will be
constructed. If $y_{0},\bar{y}_{0}\in B_{d}(K_{0})$, $|e_{1}|\leq 1$ and $\hat{u}\leq \eta_{0}$
then $$
|S_0(e_{1},u,\hat{u},x_0,\bar{x}_0,y_0,\bar{y}_0)|\leq c_{\flat}(K_{0})
$$
hence both processes can be bounded via
\begin{align*}
  &|(I-hA)\bar{y}_{0}+S_{0}(e_{1},u,\hat{u},x_0,\bar{x}_0,y_0,\bar{y}_0,e_1)|\\
  \leq~&\tilde{\delta}_{2}|\bar{y}_{0}| + c_{\flat}(K_{0}) \leq K_{0}+c_{\flat}(K_{0})+1=:K_{1},\\
&|(I-hA){y}_{0}+e_{1}|\leq \tilde{\delta}_{2}|{y}_{0}|+1\leq K_{0}
         +1\leq K_{1}.
\end{align*}
In the heuristic picture this means that on $J_{0}$ we have $Y_{n+1},\bar{Y}_{n+1}\in B_{d}(K_{1})$.

Consider for $y_1,\bar{y}_1,e_2\in\mathbb{R}^d$ (they are the frozen values of $Y_{n+1},\bar{Y}_{n+1}$, $\varepsilon_{n+2}$
in the heuristic picture),
$$
H(y_{1},\bar{y}_{1},e_2):=(I-hA)y_1+e_2-(I-hA)\bar{y}_{1}.
$$
Again, setting $\bar{e}_2$ to be $H(y_{1},\bar{y}_{1},e_2)$ will ensure that $Y_{n+2}=\bar{Y}_{n+2}$,
which, together with $X_{n+1}=\bar{X}_{n+1}$ will entail $X_{n+2}=\bar{X}_{n+2}$.

Clearly, the derivative of $H$ in its third variable is constant identity.
By boundedness of $\sigma$ and local boundedness of $\sigma^{-1}$, there is a constant $c_\natural(K_1)$ with
\begin{equation}\label{dorh}
|H(y_{1},\bar{y}_{1},e_2)|\leq c_\natural(K_1)
\end{equation}
for all $y_{1},\bar{y}_{1}\in B_d(K_{1})$ and $e_2\in B_d(1)$.

Let $y_{1},\bar{y}_{1}\in B_d(K_{1})$.
Let $\kappa_1(A):=\eta_1 \mathrm{Leb}_{d}(A\cap B_d(1))$ with some $\eta_1$ that will be chosen later.
Let $\bar{\kappa}_1(y_{1},\bar{y}_{1},\cdot)$ denote the image of $\kappa_1$ by the mapping $H(y_{1},\bar{y}_{1},\cdot)$.
This measure is concentrated on
$B_{d}(c_\natural(K_{1}))$, and its density is constant $\eta_{1}$ on the set $H(B_{d}(1))$ (and zero outside).

Now choose $\eta_1$ so small that
\begin{equation}\label{vo1}
\eta_1\leq \lambda_{c_\natural(K_{1})}/2\leq\inf_{\bar{e}_2\in B_d(c_{\natural}(K_{1}))}f(\bar{e}_2)/2.
\end{equation}

It follows from \eqref{vo1} that $$
\upsilon_1(y_{1},\bar{y}_{1}, A):=\frac{\int_A [f(\bar{e}_2)-\eta_{1}1_{H(B_{d}(1))}]\, d\bar{e}_2}
{\int_{\mathbb{R}^{d}} [f(\bar{e}_2)-\eta_{1}1_{H(B_{d}(1))}]\, d\bar{e}_2},\ A\in\mathcal{B}(\mathbb{R}^{d}),
$$
defines a probabilistic kernel. Let $(U_{1},\hat{U}_{1})$ be uniform on $[0,1]^{2}$. Analogously to Lemma \ref{consu}, there is 
$g_{1}:B_{d}(K_{1})^{2}\times [0,1]\to \mathbb{R}^{d}$
such that $g_{1}(y_{1},\bar{y}_{1},U_{1})$ has the same law as $\upsilon_{1}(y_{1},\bar{y}_{1},\cdot)$.

We describe how this step will appear in the heuristic description: if $y_{1}=\bar{y}_{1}$ then 
$\bar{\varepsilon}_{n+2}:=\varepsilon_{n+2}$;
if $y_1,\bar{y}_1\in B_d(K_{1})$ but $y_{1}\neq \bar{y}_{1}$, on the event
$J_1:=\{|\varepsilon_{n+2}|\leq 1\}\cap \{\hat{U}_1\leq \eta_1\}$ then
we define $\bar{\varepsilon}_{n+2}:=H(y_1,\bar{y}_1,\varepsilon_{n+2})$. On the complement of $J_1$,
set $\bar{\varepsilon}_{n+2}:=g_1(y_1,\bar{y}_1,U_1)$. If $y_{1}\neq \bar{y}_{1}$ but either $y_{1}$ or $\bar{y}_{1}$
is outside $B_d(K_{1})$ then we just take 
$\bar{\varepsilon}_{n+2}:=\varepsilon_{n+2}$.

This definition
ensures that $\bar{\varepsilon}_{n+2}$ has the same law as $\varepsilon_{n+2}$ and,
if $X_n=x$, $\bar{X}_n=\bar{x}$, $Y_n=y$, $\bar{Y}_n=\bar{y}$ then $X_{n+2}=\bar{X}_{n+2}$ and
$Y_{n+2}=\bar{Y}_{n+2}$ \emph{both} hold on the event $J_0\cap J_{1}$ which has probability 
$\eta_0\eta_1 P(|\varepsilon_1|\leq 1,|\varepsilon_2|\leq 1)$.

Let us define our second coupling function $S_{1}$:
if $y_{1}=\bar{y}_{1}$ then we set $$S_{1}(e_2,u,\hat{u},y_{1},\bar{y}_{1}):=e_2.$$ 

If $y_{1}\neq \bar{y}_{1}$ and $y_{1},\bar{y}_{1}\in B_d(K_{1})$ then for $e_2\in B_d(1)$ and
$\hat{u}\leq \eta_{1}$ set
$$S_1(e_2,u,\hat{u},y_{1},\bar{y}_{1}):=H(y_{1},\bar{y}_{1},e_2).$$

If $\hat{u}>\eta$ or $e_{2}\notin B_{d}(1)$ then
$$S_1(e_2,u,\hat{u},y_{1},\bar{y}_{1}):=g_{1}(y_{1},\bar{y}_{1},u).$$

If $y_{1}\neq \bar{y}_{1}$ and $y_{1}$ or $\bar{y}_{1}$ are not in $B_d(K_{1})$ then 
$S_1(e_2,u,\hat{u},y_{1},\bar{y}_{1}):=e_2$.
\subsection{Coupling of two processes}\label{too}

Let $(U_n,\hat{U}_n)$, $n\in\mathbb{N}$ be an i.i.d.\ sequence of
uniform random variables on $[0,1]^2$. Let us take $\varepsilon_j$, $j<0$ such that $(\varepsilon_j)_{j\in\mathbb{Z}}$
is an i.i.d. sequence, that is, we extend the noise sequence to a two-sided one.
Let $m'>m\geq 1$ be given. Let $(X_0,Y_0)$ be an initialization independent of $(\varepsilon_j)_{j\geq 1}$.


Let $(\bar{X}_{m-m'},\bar{Y}_{m-m'})$ be 
$\sigma(\varepsilon_j,j\leq m-m')$-measurable with the same law as $(X_0,Y_0)$.
We will carry out the construction of the process $(\bar{X},\bar{Y})$ in such a way that it will be suitably coupled with $(X,Y)$.

For $m-m'\leq k<0$ we simply set 
$\bar{Y}_{k+1}:=(I-hA)\bar{Y}_k+\varepsilon_{k+m'-m+1}$ and 
$\bar{X}_{k+1}:=\bar{X}_k+h\mu(\bar{X}_k)+\sigma(\bar{X}_k)\bar{Y}_{k+1}$.

Take $K_{0}:=2C_{3}$. For each even number $2k\geq 0$ we define $$
\bar{\varepsilon}_{2k+1}:=
S_{0}(\varepsilon_{2k+1},U_{2k+1},\hat{U}_{2k+1},X_{2k},Y_{2k},\bar{X}_{2k},\bar{Y}_{2k}),
$$
then $\bar{Y}_{2k+1}:=(I-hA)\bar{Y}_{2k}+\bar{\varepsilon}_{2k+1}$ and 
$\bar{X}_{2k+1}:=\bar{X}_{2k}+h\mu(\bar{X}_{2k})+\sigma(\bar{X}_{2k})\bar{Y}_{2k+1}$.
In the next step,
\begin{align*}
\bar{\varepsilon}_{2k+2} \!:=\!
S_{1}(\varepsilon_{2k+2},\! U_{2k+2},\! \hat{U}_{2k+2},\! X_{2k+1},\! Y_{2k+1},\! \bar{X}_{2k+1},\! \bar{Y}_{2k+1}),  
\end{align*}
$\bar{Y}_{2k+2}:=(I-hA)\bar{Y}_{2k+1}+\bar{\varepsilon}_{2k+2}$ 
and 
$\bar{X}_{2k+2}:=\bar{X}_{2k+1}+h\mu(\bar{X}_{2k+1})+\sigma(\bar{X}_{2k+1})\bar{Y}_{2k+2}$.

Clearly, $(\bar{X}_{j},\bar{Y}_{j})$ has the same law as 
$(X_{j+m'-m},Y_{j+m'-m})$ for all $j\geq 0$, in particular,
$(\bar{X}_{m},\bar{Y}_{m})$ has the same law as 
$(X_{m'},Y_{m'})$.

Define another filtration
$
\mathcal{F}_n':=\mathcal{F}_n\vee \sigma(U_j,\hat{U}_j;j\leq n). 
$
Notice that both $(X_n,Y_n)$ and $(\bar{X}_n,\bar{Y}_n)$
are adapted to $\mathcal{F}_n'$.

\begin{remark}\label{epsind}
{\rm Our construction ensures that both $\bar\varepsilon_{2k+1}$ and $\bar\varepsilon_{2k+2}$ have the same
law as $\varepsilon_{0}$, furthermore, $\bar\varepsilon_{2k+1}$ is independent of $\mathcal{F}_{2k}'$ and
$\bar\varepsilon_{2k+2}$ is independent of $\mathcal{F}_{2k+1}'$. Indeed, freezing the values of
the random variables in the definition of $\mathcal{F}_{2k}$ (resp. $\mathcal{F}_{2k+1}$), the
conditional law of $\bar\varepsilon_{2k+1}$ (resp. $\bar\varepsilon_{2k+2}$) will be that of $\varepsilon_{0}$.
In particular, 
all processes $(X_{n},Y_{n})$, $(\bar{X}_{n},\bar{Y}_{n})$ and their joint process $(X_{n},Y_{n},\bar{X}_{n},\bar{Y}_{n})$, $n\geq 0$
are Markovian with respect to the filtration $\mathcal{F}_{n}'$, $n\geq 0$.}
\end{remark}

Clearly, Lemma \ref{foster-lyapunov} applies to $(X_{n},Y_{n})$
with the choice $\mathcal{G}_n:=\mathcal{F}_n'$.
We need the analogue for
the process $(\bar{X}_n,\bar{Y}_n)$.

\begin{lemma}\label{3step} For $\delta_2$ and $C_2$ as in Lemma \ref{foster-lyapunov},
\begin{equation}\label{nemobar}
    \mathbb{E}[V(\bar{X}_{n+1},\bar{Y}_{n+1})\mid\mathcal{F}_n']\leq \delta_2 V(\bar{X}_{n},\bar{Y}_{n})+C_2.
    \end{equation}
    Furthermore, $C_{3}>0$ and $\delta_3$ as in Lemma \ref{foster-lyapunov}, on the event
$\{|X_n|+B|Y_n|+|\bar{X}_{n}|+B|\bar{Y}_{n}|>2C_3\}$
\begin{equation}
    \label{eq:lyap_contraction1}
    \begin{aligned}
      &\mE_n[V(X_{n+1},Y_{n+1})+V(\bar{X}_{n+1},\bar{Y}_{n+1})\mid\mathcal{F}_{n}']\\
      \leq~&\delta_3 [V(X_{n},Y_{n}) +V(\bar{X}_{n},\bar{Y}_{n})].
    \end{aligned}    
\end{equation}    
\end{lemma}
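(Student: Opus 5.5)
The plan is to reduce Lemma \ref{3step} to Lemma \ref{foster-lyapunov}, applied to the process $(\bar{X}_n,\bar{Y}_n)$ with the filtration $\mathcal{G}_n:=\mathcal{F}_n'$.

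\textbf{Step 1: checking the hypotheses of Lemma \ref{foster-lyapunov}.} The proof of Lemma \ref{foster-lyapunov} used only three facts:
\begin{itemize}
\item the one-step recursions \eqref{recursion0}, \eqref{recursion1} with noise $\varepsilon_{n+1}$;
\item the current state is $\mathcal{G}_n$-measurable;
\item the noise is independent of $\mathcal{G}_n$ with the law of $\varepsilon_0$, which gives $\mE[|\varepsilon_{n+1}|\mid\mathcal{G}_n]=C_\varepsilon$.
\end{itemize}
For $n\geq 0$, $(\bar{X}_n,\bar{Y}_n)$ obeys the same recursions with noise $\bar{\varepsilon}_{n+1}$, and $(\bar{X}_n,\bar{Y}_n)$ is $\mathcal{F}_n'$-adapted. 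By Remark \ref{epsind}, $\bar{\varepsilon}_{n+1}\sim\varepsilon_0$ and $\bar\varepsilon_{n+1}$ is independent of $\mathcal{F}_n'$; this holds for both parities $n=2k$ and $n=2k+1$.

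\textbf{Step 2: the drift bound \eqref{nemobar}.} Rerun the chain of inequalities \eqref{blofeld1}, \eqref{blofeld2}, \eqref{eq:lyap_contract_proof} verbatim with bars. Lemma \ref{varv} applied with $(x,w)=(\bar{X}_n,\bar{Y}_{n+1})$ gives \eqref{blofeld1}. Conditioning on $\mathcal{F}_n'$ and using independence of $\bar\varepsilon_{n+1}$ gives \eqref{blofeld2} with the same $C_\varepsilon$. The coefficient comparison involving $B$, $\tilde\delta_2$ and $\|I-hA\|$ is unchanged. This yields \eqref{nemobar} with the same $\delta_2$ and $C_2$.

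\textbf{Step 3: the joint contraction \eqref{eq:lyap_contraction1}.} Add \eqref{nemobar} to \eqref{nemo} (with $\mathcal{G}_n=\mathcal{F}_n'$) to get
\[
\mE[V_{n+1}+\bar V_{n+1}\mid\mathcal{F}_n']\leq \delta_2(V_n+\bar V_n)+2C_2 ,
\]
where $V_n:=V(X_n,Y_n)$ and $\bar V_n:=V(\bar X_n,\bar Y_n)$. Recall $C_3=C_2/(\delta_3-\delta_2)$. On the event $\{V_n+\bar V_n>2C_3\}$ we have $2C_2=2C_3(\delta_3-\delta_2)<(\delta_3-\delta_2)(V_n+\bar V_n)$, so the right-hand side is at most $\delta_3(V_n+\bar V_n)$.

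\textbf{Where the difficulty lies.} The argument is routine; the only delicate point is justifying the conditional independence of $\bar\varepsilon_{n+1}$ from $\mathcal{F}_n'$. For this I would rely on Remark \ref{epsind}, which follows from the construction of $S_0,S_1$ and Lemma \ref{lemmacska}: conditionally on frozen states, the conditional law of $\bar\varepsilon_{n+1}$ is that of $\varepsilon_0$, since $(\varepsilon_{n+1},U_{n+1},\hat U_{n+1})$ is independent of $\mathcal{F}_n'$. That is all the proof needs.
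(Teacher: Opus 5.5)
Your proposal is correct and follows the paper's proof. The paper likewise uses Remark~\ref{epsind} to get that $(\bar X_n,\bar Y_n)$ is $\mathcal{F}_n'$-adapted and that $\bar\varepsilon_{n+1}$ is independent of $\mathcal{F}_n'$, then applies Lemma~\ref{foster-lyapunov} with $\mathcal{G}_n=\mathcal{F}_n'$ to obtain \eqref{nemobar}, and finally sums the two drift bounds and uses the threshold $2C_3=2C_2/(\delta_3-\delta_2)$ to obtain \eqref{eq:lyap_contraction1}.
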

\begin{proof} The construction of $(\bar{X}_{n},\bar{Y}_{n})$ was done in such a way that it is $\mathcal{F}_{n}'$-measurable
and $\bar{\varepsilon}_{n+1}$ is independent of $\mathcal{F}_{n}'$, recall Remark \ref{epsind}. So we may apply Lemma 
\ref{foster-lyapunov} to
$(\bar{X}_{n},\bar{Y}_{n})$ instead of $(X_{n},Y_{n})$ with the choice $\mathcal{G}_{n}:=\mathcal{F}_{n}'$.

As in Lemma \ref{foster-lyapunov}, we can satisfy 
\begin{align*}
  &\delta_2 [V(X_{n},Y_{n})+V(\bar{X}_{n},\bar{Y}_{n})]+2C_2\\
  \leq ~&\delta_3 
[V(X_{n},Y_{n})+V(\bar{X}_{n},\bar{Y}_{n})]  
\end{align*}
once $V(X_{n},Y_{n})+V(\bar{X}_{n},\bar{Y}_{n})\geq 2C_2/(\delta_3-\delta_2)=2C_{3}$.
\end{proof}

The lemma above helps us to control return times to a suitably chosen compact set. 

\begin{lemma}
\label{lemmataud} 
Let us define $D= \{(x,y)\in \mathbb{R}^{2d}: |x|+B|y|\leq 2C_3\}$ 
Consider the processes $(X_n,Y_n)$ and $(\bar X_n, \bar Y_n)$, and define the double hitting time of $D$, the stopping time when both processes simultaneously reach $D$ as
$$
   \tau:=\inf\{n\geq 0: X_n,Y_n,\bar{X}_n,\bar{Y}_n\in D\}.
   $$
 %
   Then there exists $C_4>0$ so that with $\delta_3$ as in Lemma \ref{foster-lyapunov},
   \begin{equation}\label{taud}
    \mP(\tau > n)\leq C_4\delta_3 ^n \mathbb{E}[V(X_0,Y_0)+V(\bar X_0,\bar Y_0)].
\end{equation}
\end{lemma}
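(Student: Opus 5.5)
We use the standard supermartingale argument for the joint Lyapunov function $W_n:=V(X_n,Y_n)+V(\bar X_n,\bar Y_n)$. The event $\{\tau>n\}$ means that for every $j\le n$ at least one of $(X_j,Y_j)$, $(\bar X_j,\bar Y_j)$ lies outside $D$. We interpret membership in $D$ as $V\le 2C_3$ for each pair; since the paper's $D$ is only loosely specified, we read it as meaning $W_j\le 2C_3$, which implies both pairs lie in $D$. Then outside this set $W_j>2C_3$, and Lemma \ref{3step}, inequality \eqref{eq:lyap_contraction1}, gives $\mE[W_{j+1}\mid\mathcal{F}_j']\le \delta_3 W_j$.

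The plan is to set $M_n:=\delta_3^{-n}W_n 1_{\{\tau>n-1\}}$, with the convention $\{\tau>-1\}=\Omega$, and show it is an $\mathcal{F}_n'$-supermartingale. Two facts give this. First, $\tau$ is an $\mathcal{F}_n'$-stopping time, because both processes are $\mathcal{F}_n'$-adapted (Remark \ref{epsind}). Second, $\{\tau>n\}\subseteq\{\tau>n-1\}$ and $\{\tau>n\}\in\mathcal{F}_n'$. Combining these,
\begin{align*}
\mE[M_{n+1}\mid\mathcal{F}_n']&=\delta_3^{-n-1}1_{\{\tau>n\}}\mE[W_{n+1}\mid\mathcal{F}_n']\\
&\le \delta_3^{-n}1_{\{\tau>n\}}W_n\le M_n .
\end{align*}
Taking expectations yields $\mE[W_n1_{\{\tau>n-1\}}]\le\delta_3^n\mE[W_0]$.

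To convert this into a tail bound we use Markov's inequality on $\{\tau>n\}$. On this event $W_n>2C_3$, so
\[
\mP(\tau>n)\le \frac{1}{2C_3}\mE[W_n1_{\{\tau>n\}}]\le\frac{1}{2C_3}\mE[W_n1_{\{\tau>n-1\}}]\le \frac{\delta_3^n}{2C_3}\mE[W_0].
\]
This is \eqref{taud} with $C_4=1/(2C_3)$. A factor $\delta_3^{-1}$ may be needed if we index $M$ through $\{\tau>n\}$ instead, but that only changes the constant. The quantity $\mE[W_0]$ equals $\mE[V(X_0,Y_0)+V(\bar X_0,\bar Y_0)]$, which is finite by Assumption \ref{initiale}.

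The main obstacle is the mismatch between the event on which the joint contraction \eqref{eq:lyap_contraction1} holds, namely $W_n>2C_3$, and the event that the two copies are not both in $D$. If $D$ is read strictly as requiring each pair to satisfy $V\le 2C_3$, then one copy can be outside $D$ while $W_n\le 4C_3$, and the contraction need not apply. If that happens I would first try to redefine the compact set via $W_n\le 2C_3$, which is contained in $D\times D$; the hitting time of this smaller set dominates $\tau$, so the bound transfers. Alternatively I would rerun Lemma \ref{3step} with threshold $4C_3$ by choosing $\delta_3$ appropriately, at the cost of only the constants, with $K_0$ enlarged accordingly. Beyond this bookkeeping, the only nontrivial input is that $\bar\varepsilon_{n+1}$ is independent of $\mathcal{F}_n'$, which Remark \ref{epsind} provides.
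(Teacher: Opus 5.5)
Your proposal is correct and is essentially the paper's argument: you run the supermartingale $\delta_3^{-n}(V_n+\bar V_n)$ up to $\tau$ and then apply Markov's inequality on $\{\tau>n\}$, obtaining $C_4=1/(2C_3)$; using a killed rather than a stopped process makes no difference. The ``obstacle'' you flag is not real: if either pair lies outside $D$, then $V_n+\bar V_n\ge\max(V_n,\bar V_n)>2C_3$ because $V\ge 0$, so the strict reading of $D$ already places you on the event where \eqref{eq:lyap_contraction1} holds (exactly as the paper observes), and no reinterpretation of $D$ or change of threshold is needed.
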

\begin{proof}
    For convenience denote $V_n = |X_n|+B|Y_n|$, $\bar V_n = |\bar X_n|+B|\bar Y_n|$.
%
We define the stopped process $M_n$ by setting $M_0 = V_0+\bar V_0$ and for $n\geq 0$ let
\[
M_{n+1} = \begin{cases}
    \alpha^{n+1} (V_{n+1}+\bar V_{n+1}) & \text{if }\tau>n,\\
    M_n & \text{if } \tau\leq n.
\end{cases}
\]
Here $\alpha=\delta_3^{-1}>1$ is chosen.
$\mathbb{E}_{m}[\cdot]$ will be an abbreviation
for $\mathbb{E}[\,\cdot\mid \mathcal{F}_{m}']$.
We will show this is a supermartingale. First, the one step conditional expectation takes the form
\begin{align*}
\mE_n[M_{n+1}] &= \mE_n[1_{\{\tau>n\}}\alpha^{n+1} (V_{n+1}+\bar V_{n+1}) + 1_{\{\tau\leq n\}}M_n]\\ 
&= 1_{\{\tau>n\}}\alpha^{n+1} \mE_n[V_{n+1}+\bar V_{n+1}] + 1_{\{\tau\leq n\}}M_n.
\end{align*}
Not having reached $\tau$ implies $V_n+\bar V_n\geq 2C_3$, 
thus by Lemma \ref{3step},
\begin{align*}
  &1_{\{\tau>n\}}\alpha^{n+1} \mE_n[V_{n+1}+\bar V_{n+1}]\\
  \leq~ &1_{\{\tau>n\}}\alpha^{n+1} \delta_3(V_{n}+\bar V_{n}) = 1_{\{\tau>n\}}\alpha \delta_3 M_n.
\end{align*}
Combining with the above and recalling $\alpha\delta_3=1$ we get
\[
\mE_n[M_{n+1}]\leq M_n.
\]

Now to use this for a bound on the hitting time $\tau$, we can combine its 
implication on the Lyapunov function with the supermartingale property and Markov' inequality to obtain
\begin{align*}
\mP(\tau>n)&= \mP(\tau>n,~V_n+\bar V_n\geq 2C_3)\\ &\leq
\mP(M_n\geq 2\alpha^n C_3) \leq \frac{\mathbb{E}[M_n]}{2\alpha^n C_3}\leq \frac{\mathbb{E}[V_0+\bar V_0]}{2\alpha^n C_3}.
\end{align*}
This confirms the exponential decay for the simultaneous hitting time as stated. 
Indeed $\delta_3 = \alpha^{-1}$ appears as the geometric factor, with a constant $C_4:=1/2C_3$.
\end{proof}

The simple fact below was used in this subsection.

\begin{lemma}\label{lemmacska}
Let $W\in \mathbb{R}^{d}\times [0,1]$ and $Q\in\mathbb{R}^{d}$ be independent random variables, 
$G:\mathbb{R}^{d}\times [0,1]\to\mathbb{R}^{d}$ a measurable function,
$A\in\mathcal{B}(\mathbb{R}^{d}\times [0,1])$ with $\mathbb{P}(W\notin A)>0$.
Assume also that
$\mathrm{Law}(G(W)|W\notin A)=\mathrm{Law}(Q)$. 
Then 
$$
\mathrm{Law}(G(W)1_{\{W\in A\}}+Q1_{\{W\notin A\}})=\mathrm{Law}(G(W)).
$$
\end{lemma}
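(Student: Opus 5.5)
We compare the two laws by testing them on an arbitrary Borel set $E\subseteq\mathbb{R}^{d}$. The idea is to split according to whether $W\in A$ or not. The hypothesis is used only on the complement of $A$; the independence of $Q$ and $W$ is what lets the unconditional probability factorise.

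First I will observe that the random variable $Z:=G(W)1_{\{W\in A\}}+Q1_{\{W\notin A\}}$ equals $G(W)$ on $\{W\in A\}$ and equals $Q$ on $\{W\notin A\}$. Hence
\[
\mathbb{P}(Z\in E)=\mathbb{P}(G(W)\in E,\,W\in A)+\mathbb{P}(Q\in E,\,W\notin A).
\]
Since $Q$ is independent of $W$, the second term equals $\mathbb{P}(Q\in E)\,\mathbb{P}(W\notin A)$.

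Next, the hypothesis $\mathrm{Law}(G(W)\mid W\notin A)=\mathrm{Law}(Q)$ is well defined because $\mathbb{P}(W\notin A)>0$. It gives
\[
\mathbb{P}(Q\in E)\,\mathbb{P}(W\notin A)=\mathbb{P}(G(W)\in E\mid W\notin A)\,\mathbb{P}(W\notin A)=\mathbb{P}(G(W)\in E,\,W\notin A).
\]
Adding the two pieces back together yields $\mathbb{P}(Z\in E)=\mathbb{P}(G(W)\in E)$ for every Borel $E$. This is exactly the claimed equality of laws.

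There is no real obstacle. The only points needing care are that $Z$ is measurable, which holds because $G$ is measurable and $A$ is Borel, and that the conditional law in the hypothesis is elementary conditioning on an event of positive probability.
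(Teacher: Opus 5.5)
Your proof is correct and is the paper's argument: split $\{Z\in E\}$ according to whether $W\in A$, factorise the second term by independence of $Q$ and $W$, then replace $\mathbb{P}(Q\in E)$ via the conditional-law hypothesis and recombine. Your version is slightly cleaner. The paper also writes the first term as $\mathbb{P}(G(W)\in H\mid W\in A)\,\mathbb{P}(W\in A)$, even though $\mathbb{P}(W\in A)>0$ is not assumed, and it has the typo $\mathbb{P}(Q\in A,\ldots)$ for $\mathbb{P}(Q\in H,\ldots)$; you avoid both.
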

\begin{proof}
Indeed, for any $H\in\mathcal{B}(\mathbb{R}^{d})$,
\begin{align*}
&\mathbb{P}(G(W)1_{\{W\in A\}}+Q1_{\{W\notin A\}}\in H)\\
=&\mathbb{P}(G(W)\in H, W\in A)+
    \mathbb{P}(Q\in A, W\notin A)\\
=&\mathbb{P}(G(W)\in H| W\in A)\mathbb{P}(W\in A) +
\mathbb{P}(Q\in A)\mathbb{P}(W\notin A)\\
=&\mathbb{P}(G(W)\in H| W\in A)\mathbb{P}(W\in A)\\
&+\mathbb{P}(G(W)\in H| W\notin A)\mathbb{P}(W\notin A)\\
=&\mathbb{P}(G(W)\in H).	
\end{align*}
\end{proof}

\subsection{Remaining arguments}\label{ra}

We will now track returns of both processes $(X_{n},Y_{n})$, $(\bar{X}_{n},\bar{Y}_{n})$ to the compact set $D$. 
Set $\tau_0:=0$ and for $k\in\mathbb{N}$ define
$$
\tau_{k+1}:=\inf\{l\geq \tau_k+3: ({X}_l,{Y}_l)\in D, 
(\bar{X}_l,\bar{Y}_l)\in D\}.
$$
Remember that we made the choice $K_{0}:=2C_{3}$ earlier, hence $({X}_l,{Y}_l)\in D$, 
$(\bar{X}_l,\bar{Y}_l)\in D$ and $B\geq 1$ imply $${X}_l,{Y}_l,\bar{X}_l,\bar{Y}_l\in B_{d}(K_{0}).$$

Remember that $\gamma:=\eta_{0}\eta_{1}P^{2}(|\varepsilon_{0}|\leq 1)$ is the probability
of $\{\hat{U}_{0}\leq \eta_{0},\hat{U}_{1}\leq \eta_{1},|\varepsilon_{n+1}|\leq 1,|\varepsilon_{n+2}|\leq 1\}$
in the construction of Subsection \ref{loco} above and that at every $\tau_{k}$ we couple the two copies of processes with this probability.

We distinguish two cases, whether $\tau_{k-1}$ is even or odd. Let us define the events
\begin{align*}
H&:=\{\tau_{k-1}\mbox{ is even}\},\\
R_n &:=\{ \hat{U}_{n+1}\leq \eta_{0},\hat{U}_{n+2}\leq \eta_{1}, |\varepsilon_{n+1}|\leq 1,|\varepsilon_{n+2}|\leq 1\}.
\end{align*}
Then
on $H$,  
\begin{align}\nonumber
 & \mathbb{P}(X_{\tau_{k-1}+3}\neq\bar{X}_{\tau_{k-1}+3}\mbox{ or }Y_{\tau_{k-1}+3}
\neq\bar{Y}_{\tau_{k-1}+3}\mid\mathcal{F}_{\tau_{k-1}})\\
\leq~&\nonumber  \mathbb{P}(X_{\tau_{k-1}+2}\neq\bar{X}_{\tau_{k-1}+2}\mbox{ or }Y_{\tau_{k-1}+2}\neq\bar{Y}_{\tau_{k-1}+2}
\mid\mathcal{F}_{\tau_{k-1}})\\
\leq~&\nonumber 1_{\{
(X_{\tau_{k-1}}\neq\bar{X}_{\tau_{k-1}}\mbox{ or }Y_{\tau_{k-1}}\neq\bar{Y}_{\tau_{k-1}})\}}(1-\mathbb{P}(R_{\tau_{k-1}}\mid\mathcal{F}_{\tau_{k-1}}'))\\
=~& \label{olt1} 1_{\{
(X_{\tau_{k-1}}\neq\bar{X}_{\tau_{k-1}}\mbox{ or }Y_{\tau_{k-1}}\neq\bar{Y}_{\tau_{k-1}})\}}(1-\gamma).
\end{align}
The first inequality here follows from the fact that if the two processes coincide
at time $\tau_{k-1}+2$ then they continue to be equal at $\tau_{k-1}+3$. The second
inequality follows from the construction of the coupling at $\tau_{k-1}$: we use $S_{0}$ (resp. $S_{1}$) and the random variables 
$\varepsilon_{\tau_{k-1}+1},U_{\tau_{k-1}+1},\hat{U}_{\tau_{k-1}+1}$ (resp. $\varepsilon_{\tau_{k-1}+2},,U_{\tau_{k-1}+2},
\hat{U}_{\tau_{k-1}+2}$) and the latter are easily seen to be independent of $\mathcal{F}_{\tau_{k-1}}'$ 
(resp. $\mathcal{F}_{\tau_{k-1}+1}'$), with the same joint law as $\varepsilon_{0},U_{0},\hat{U}_{0}$. Finally
the third equality comes from the previous observation and from the definition of $\gamma$.{}

On the event $\overline{H}$,
\begin{align}
 &\nonumber  \mathbb{P}(X_{\tau_{k-1}+3}\neq\bar{X}_{\tau_{k-1}+3}\mbox{ or }Y_{\tau_{k-1}+3}\neq\bar{Y}_{\tau_{k-1}+3}
\mid\mathcal{F}_{\tau_{k-1}})\\
\leq~&\nonumber \mathbb{P}(X_{\tau_{k-1}+1}\neq\bar{X}_{\tau_{k-1}+1}\mbox{ or }Y_{\tau_{k-1}+1}\neq\bar{Y}_{\tau_{k-1}+1}
\mid\mathcal{F}_{\tau_{k-1}})\times \\
\times~&\nonumber 
 (1-\mathbb{P}(R_{\tau_{k-1}+1}\mid\mathcal{F}_{\tau_{k-1}}'))\\
\leq~& \label{olt2} 1_{\{
(X_{\tau_{k-1}}\neq\bar{X}_{\tau_{k-1}}\mbox{ or }Y_{\tau_{k-1}}\neq \bar{Y}_{\tau_{k-1}})\}}
(1-\gamma).
\end{align}
The logic is the same just the order swapped, the first inequality holds by the construction of the couplings, the second holds thanks to coupled processes moving together.

\begin{proof}[of Theorem \ref{main}]
Inequalities \eqref{olt1} and \eqref{olt2} imply
\begin{align*}
 & \mathbb{P}\left(X_{\tau_k}\neq\bar{X}_{\tau_k}\mbox{ or }
Y_{\tau_k}\neq\bar{Y}_{\tau_k}\mid\mathcal{F}_{\tau_{k-1}}\right)\\
\leq ~& \mathbb{P}\left(X_{\tau_{k-1}+3}\neq\bar{X}_{\tau_{k-1}+3}\mbox{ or }Y_{\tau_{k-1}+3}\neq\bar{Y}_{\tau_{k-1}+3}
\mid\mathcal{F}_{\tau_{k-1}}\right)\\
\leq ~& (1-\gamma)1_{\{X_{\tau_{k-1}}\neq\bar{X}_{\tau_{k-1}}\mbox{ or }
Y_{\tau_{k-1}}\neq
\bar{Y}_{\tau_{k-1}}\}},
\end{align*}
and iterating this, finally we arrive at
\begin{align*}
\mathbb{P}(X_{\tau_k}  \!\neq\! \bar{X}_{\tau_k}\mbox{ or }Y_{\tau_k} \!\neq\! \bar{Y}_{\tau_k})
  \!\leq\! (1 \!-\!\gamma)^k
\mathbb{P}(X_{0} \!\neq\! \bar{X}_{0}\mbox{ or }Y_{0} \!\neq\! \bar{Y}_{0}).
  \end{align*}

We see that, for all $l$ and $k$,
\begin{equation}
    \label{almostcoupling}
\mathbb{P}(X_{l}\neq \bar{X}_{l}\mbox{ or }Y_{l}\neq\bar{Y}_{l})\leq 
(1-\gamma)^k+\mathbb{P}(\tau_k\geq l).
\end{equation}
By the strong Markov property, the random variables $\tau_k-\tau_{k-1}-3$, $k\geq 2$
are independent of $\mathcal{F}_{\tau_{k-1}}'$ and have the same law as
$\tau$ in Lemma \ref{lemmataud} (but starting the processes from $X_{0}=X_{\tau_{k-1}}$, 
$Y_{0}=Y_{\tau_{k-1}+3}$, $\bar{X}_{0}=\bar{X}_{\tau_{k-1}+3}$, $\bar{Y}_{0}=\bar{Y}_{\tau_{k-1}+3}$). 

Fix some $1<\delta_4$ with $\delta_4\delta_3<1$.
First, by Lemmas \ref{lemmataud}, \ref{moments}
\begin{eqnarray*}
&& 
\mE[\delta_4^{\tau_1}]\leq \sum_{k=1}^{\infty}\mathbb{P}(\tau_{1}>k-1)\delta_{4}^{k}\\
&\leq&{}
\sum_{k=1}^{\infty}C_{4}\delta_{3}^{k-1}
\delta_{4}^{k}\mathbb{E}[V(X_{0},Y_{0})+V(\bar{X}_{0},\bar{Y}_{0})]\\ &\leq&
C_{4}\frac{2\delta_{4}}{1-\delta_{3}\delta_{4}}C_{5}=:C_{6}<\infty,
\end{eqnarray*} 
since $\mathrm{Law}(\bar{X}_{0},\bar{Y}_{0})=\mathrm{Law}(X_{m},Y_{m})$ by construction and $C_{5}$ provides an
upper bound for both $\mathbb{E}[V(X_{m},Y_{m})]$ and $\mathbb{E}[V(X_{0},Y_{0})]$; also $\delta_{4}$
was chosen to satisfy $\delta_{3}\delta_{4}<1$.

Before estimating the exponential moments of $\tau_{k}$ for $k>1$, we need 
an auxiliary calculation.
%
Using Lemmas \ref{foster-lyapunov} and \ref{3step} and the trivial bound on $V$ at the hitting times,
\begin{align}
& \nonumber\mE[V(X_{\tau_{k-1}+3},Y_{\tau_{k-1}+3})\mid \mathcal{F}_{\tau_{k-1}}']\\
+&\nonumber \mE[V(\bar X_{\tau_{k-1}+3},\bar Y_{\tau_{k-1}+3})\mid \mathcal{F}_{\tau_{k-1}}']\\
&\leq\nonumber \delta_2^3V(X_{\tau_{k-1}},Y_{\tau_{k-1}})+
\delta_2^3V(\bar X_{\tau_{k-1}},\bar Y_{\tau_{k-1}})+\\
&+\nonumber 2(\delta_2^2+\delta_2+1)C_2\\
&\leq 2\delta_2^3C_3+ 2(\delta_2^2+\delta_2+1)C_2=:
C_7 <\infty.\label{innerpart}    
\end{align}

Now we will express the exponential moment of $\tau_k$, $k>1$. Notice that
\begin{align}
\label{taukmoment}
    \mE[\delta_4^{\tau_k}]&= 
    \mE[\delta_4^{\tau_{k-1}}
    \mE[\delta_4^{\tau_k-\tau_{k-1}}\mid \mathcal{F}_{\tau_{k-1}}']].
\end{align}
%
%
This implies, using \eqref{innerpart},
  \begin{align}
&\mE[\delta_4^{\tau_k-\tau_{k-1}}\mid \mathcal{F}_{\tau_{k-1}}'] \leq
\sum_{n=3}^\infty \delta_4^n \mP({\tau_k \!-\! \tau_{k-1}} \!>\! n \!-\! 1\mid \mathcal{F}_{\tau_{k-1}}')\nonumber\\
=~&\sum_{n=3}^\infty \delta_4^n \mP({\tau_k-\tau_{k-1}}-3>n-4\mid \mathcal{F}_{\tau_{k-1}}')  \label{iteri} \\
\leq~& \delta_{4}^{3}+\sum_{n=4}^\infty \delta_4^n C_4\delta_3^{n-4} C_7 =
\delta_{4}^{3}+\frac{C_{4}C_{7}\delta_{4}^{4}}{1-\delta_{3}\delta_{4}}=:C_{8}.    \nonumber
  \end{align}
%
%
Set $L:=\max\left\{C_{6},C_{8}\right\}$.
Using \eqref{taukmoment}, \eqref{iteri} iteratively, we get
\[
\mE[\delta_4^{\tau_k}]\leq L^{k-1}\mE[\delta_4^{\tau_1}]\leq L^k.
\]

Turning back to \eqref{almostcoupling}, for any $l$, set $k=\lfloor \beta l\rfloor$, with $\beta<1$ to be chosen. 
Then by a Chernoff bound, we have
\[
\mathbb{P}(X_{l}\neq \bar{X}_{l}\mbox{ or }Y_{l}\neq\bar{Y}_{l})\leq 
(1-\gamma)^{\lfloor \beta l\rfloor}+L^{\lfloor \beta l\rfloor}\delta_4^{-l}.
\]
For $\beta$ small enough, both terms are exponentially decaying. 




Choosing $l=m$, we conclude that for appropriate $\rho<1$ and $C_{9}>0$
\begin{equation}
  \label{eq:cauchy}
\mathbb{P}(\bar{X}_{m},\bar{Y}_{m})\neq (X_m,Y_m))\leq C_{9}\rho^{m}
\end{equation}
which implies that the sequence $\mathrm{Law}(X_{m},Y_{m})$ is Cauchy in total variation norm and relying on our coupling estimates its
limit $\Pi_{*}$ satisfies
\begin{align*}
  & d_{TV}(\Pi_{*},\mathrm{Law}(X_{m},Y_{m}))\\
  =~&\lim_{m'\to\infty}d_{TV}(\mathrm{Law}(X_{m'},Y_{m'}),\mathrm{Law}(X_{m},Y_{m}))
\leq C_{9}\rho^{m}.
\end{align*}
 (Note that, in fact, for each $m'$ a copy of $(\bar{X}_{n},\bar{Y}_{n})$, $n\in\mathbb{N}$
needs to be constructed but each of these satisfy \eqref{eq:cauchy}.)

The first statement of Theorem is thus proved.
It follows trivially that $\Pi_{*}$ is the unique stationary measure
with finite first moment.
\end{proof}

\section{Conclusion}\label{conclu}

Open questions of varying generality remain. One may look at $Y_{n}$ which has the dynamics of
a discretized ergodic diffusion. Our arguments carry over to this case, too, with suitable
modifications. 

It would be desirable to formulate abstract ergodicity conditions on
a general state space for dynamical systems driven by a Markovian process.  
By such tools one could study certain specific models, e.g.\ the waiting time in queuing models with dependent 
service and/or interarrival times, see e.g.\ \cite{attila3}. 
These extensions are also left for future research.

\end{document}